\theoremstyle{plain}
\newtheorem{theorem}{Theorem}
\newtheorem{lemma}[theorem]{Lemma}
\theoremstyle{remark}
\newtheorem{remark}{Remark}
\theoremstyle{definition}
\newtheorem{define}{Definition}
\begin{document}
\date{} 

\title[Existence of parabolic boundary points]{Existence of parabolic boundary points of certain domains in $ \mathbb C^2$}

\author{Fran\c{c}ois Berteloot and Ninh Van Thu} 
\maketitle      
\begin{abstract} In this paper, the existence of  parabolic boundary points of certain convex domains in $\mathbb C^2$ is given. On the other hand, the nonexistence of parabolic boundary points of infinite type of certain domains in $\mathbb C^2$ is also shown. 
\end{abstract}

\section{Introduction}
Let $\Omega$ be a domain in $\mathbb C^n$. Denote by $Aut(\Omega)$ the group of holomorphic automorphisms of $\Omega$. The group
$Aut(\Omega)$ is a topological group with the natural topology of uniform convergence on compact sets of $\Omega$ (i.e., the compact-open topology). 

It is a standard and classical result of H. Cartan that if $\Omega$ is a bounded domain in $\mathbb C^n$ and the automorphism group of $\Omega$ is noncompact then there exist a point $x \in \Omega$, a point $p \in \partial \Omega$, and automorphisms $\varphi_j \in Aut(\Omega)$ such that $\varphi_j(x) \to p$. In this circumstance we call $p$ a {\it boundary orbit accumulation point}. The classification of domains with noncompact automorphism group relies deeply on the study the geometry of the boundary at an orbit accumulation point $p$. For instance, Wong and Rosay \cite{W},\cite{R} showed that if $p$ is a strongly pseudoconvex point, then the domain  
is biholomorphic to the ball. In \cite{B-P1}, \cite{B-P2}, \cite{B-P3}and \cite{Ber}, E. Bedford, S. Pinchuk and F. Berteloot showed that if $p$ is of finite type, then the domain is biholomorphic to the domain of the following form 
$$M_P =\{ (w,z)\in \mathbb C^2: \text{Re} w+ P(z,\bar z)<0\}$$
where $P$ is an homogeneous polynomial in $z$ and $\bar z$. Each domain $M_P$ is called a model of $\Omega$ at $p$. To prove this, they first applied the Scaling method to point out that $Aut(\Omega)$ contains a parabolic subgroup, i.e., there is a point $p_\infty\in \partial\Omega$ and a one-parameter subgroup $\{h^t\}_{t\in\mathbb R}\subset Aut(\Omega)$ such that for all $z\in \Omega$
\begin{equation}\label{eqt1}
\lim_{t\to\pm \infty} h^t(z)=p_\infty.
\end{equation}
Each boundary point satisfying (\ref{eqt1}) is called a parabolic boundary point of $\Omega$. After that, the local analysis of a holomorphic vector field $H$ which generates the above subgroup $h^t$ was carried out to show that $\Omega$ is biholomorphic to the desired homogeneous model.

We now consider a bounded domain $\Omega\subset\mathbb C^2$. Suppose that $\Omega$ is biholomorphic to the domain $D$ defined by $D=\{(w,z)\in \mathbb C^2: \text{Re} w+\sigma(z)<0\}$ with some smooth real-valued function on the complex plane. The one-parameter of translations $\{L^t\}_{t\in\mathbb R}$ given by $L^t(w,z)=(w+it,z)$ acts on the domain $D$. The transformation $\psi: D\to \Omega$ allows us to define the one-parameter group of biholomorphic mappings $\{h^t:=\psi^{-1}\circ L^t\circ \psi \}_{ t\in \mathbb R}$ acting on $\Omega$. The fisrt aim of this paper is to show that this one-parameter group is parabolic. Namely, we prove the following theorem.

\begin{theorem}\label{T1}Let $\Omega$ be a $\mathcal C^1$-smooth, bounded, strictly geometrically convex domain in $\mathbb C^2$. Let $\psi: \Omega \to D $ be a biholomorphism, where $D:=\{(w,z)\in \mathbb C^2:\text{Re} w+\sigma(z)<0\}$, $\sigma$ is a $\mathcal C^1$-smooth nonnegative function on the complex plane such that $\sigma(0)=0$. Then, there exists some point $a_\infty\in \partial\Omega$ such that $\lim\limits_{t\to +\infty}\psi^{-1}(w\pm it,z)=a_\infty  $  for any $(w,z)\in D$.
\end{theorem}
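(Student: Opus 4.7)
Set $h^t:=\psi^{-1}\circ L^t\circ\psi$, so that for $x=\psi^{-1}(w,z)\in\Omega$ the orbit under the one-parameter group $\{h^t\}_{t\in\mathbb R}\subset Aut(\Omega)$ is $h^t(x)=\psi^{-1}(w+it,z)$. Since $L^t$ has no fixed point in $D$, no $h^t$ has a fixed point in $\Omega$. The plan is to show, in turn, that (i)~every cluster value of $h^t(x)$ as $|t|\to\infty$ lies on $\partial\Omega$; (ii)~this cluster set reduces to a single point $a_\infty$; and (iii)~$a_\infty$ is common to the two directions $t\to\pm\infty$ and independent of the base point $(w,z)\in D$.

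Assertion (i) is a properness argument: if $h^{t_n}(x)\to a\in\Omega$ for $|t_n|\to\infty$, then by continuity of $\psi$ one would have $(w+it_n,z)=\psi(h^{t_n}(x))\to\psi(a)\in D$, which is absurd. For (ii), I use that each $h^t$ is an isometry of the Kobayashi distance $k_\Omega$, together with the classical boundary lower bound in bounded, strictly convex $\mathcal C^1$ domains: if $p\ne q\in\partial\Omega$ and $x_n\to p$, $y_n\to q$, then $k_\Omega(x_n,y_n)\to\infty$. Set $\phi(t):=k_\Omega(x,h^t x)=k_D((w,z),(w+it,z))$. The hypothesis $\sigma\ge 0$ makes $D\subset\{\mathrm{Re}\,w<0\}\times\mathbb C$, while $D$ contains the half-plane slice $\{(w,z_0):\mathrm{Re}\,w<-\sigma(z_0)\}$; sandwiching $k_D$ between the Kobayashi distances of these two comparison domains yields $\phi(t)\asymp\log|t|$, so $\phi(t)\to+\infty$ \emph{sublinearly}. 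By isometry, $k_\Omega(h^{t_n}x,h^{s_n}x)=\phi(t_n-s_n)$; if two distinct cluster points $p,q$ existed, the boundary lower bound would force $|t_n-s_n|\to\infty$. One then invokes a Wolff--Denjoy theorem in the spirit of Abate and Budzy\'nska--Kuczumow--Reich for strictly convex $\mathcal C^1$ domains---built on the small and big horospheres, which coincide in this setting---to conclude that all orbits converge to the unique Wolff boundary point $a_\infty$ of the group. The sublinearity of $\phi$ places $\{h^t\}$ in the \emph{parabolic} regime (as opposed to hyperbolic, for which $\phi$ would grow linearly), so the same $a_\infty$ attracts $h^{+t}(x)$ and $h^{-t}(x)$ as $t\to+\infty$. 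Independence with respect to $x$ is then standard, following from the equicontinuity of $\{h^t\}$ on compact subsets of $\Omega$.

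The main obstacle is this Wolff--Denjoy step in (ii)--(iii): executing the horospheric argument in $\mathcal C^1$ (rather than $\mathcal C^2$) regularity and verifying that $\phi$ grows strictly sublinearly---which is exactly what the nonnegativity of $\sigma$, through the flat half-space embedding of $D$, secures.
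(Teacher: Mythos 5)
Your outline assembles the right ingredients (Kobayashi isometry of $h^t$, boundary lower bounds coming from strict convexity, horospheres, logarithmic growth of $k_D\big((w,z),(w+it,z)\big)$ via the sandwich between the half-plane slice and the half-space projection), and these are indeed the tools the paper uses. But the decisive step is left to a black box that is not available in the stated generality: you ``invoke a Wolff--Denjoy theorem \dots for strictly convex $\mathcal C^1$ domains'' and assert that small and big horospheres coincide. Abate's horosphere machinery and the Wolff--Denjoy theorems you allude to are proved for $C^2$ (or higher) strongly convex domains and for iteration of a single self-map; the statement you need --- a one-parameter group in a merely $\mathcal C^1$ strictly geometrically convex domain has a single Denjoy--Wolff point attracting \emph{both} time directions --- is essentially the theorem itself. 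Likewise, ``$\phi$ grows sublinearly, hence the group is parabolic, hence the forward and backward limits agree'' is a correct heuristic from the disk model, but you give no mechanism transporting it to $\Omega$; and your observation that two distinct cluster points would force $|t_n-s_n|\to\infty$ is not by itself a contradiction. So as written the proof has a genuine gap exactly where you yourself locate ``the main obstacle.''

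The paper closes this gap with two concrete devices you would need to supply. First, it builds the horosphere localization from scratch in the $\mathcal C^1$ strictly geometrically convex setting: a family of holomorphic peak functions gives the estimate $k_\Omega(z,w)\geq -\tfrac12\ln d(z,\partial\Omega)-K(\epsilon)$ (Lemma \ref{Lem3}), hence the two-sided estimate of Lemma \ref{Lem2}, hence Lemma \ref{Lem1}: the closure of a big horosphere $F^\Omega_{a}(\eta,R)$ meets $\partial\Omega$ only at $\eta$. (Only big horospheres are used; no coincidence with small ones is needed.) Second, instead of a qualitative parabolicity argument, it shows by direct computation that the \emph{entire two-sided orbit} $\{b_t=\psi^{-1}(-1+it,0): t\in\mathbb R\}$ lies in a single big horosphere centered at $a_\infty=\lim\psi^{-1}(-t_n,0)$: using that the slice $\{z=0\}$ of $D$ is exactly the half-plane $\{\operatorname{Re}w<0\}$ (this is where $\sigma\ge 0$ and $\sigma(0)=0$ enter), the quantity $\lim_n\big[k_\Omega(b_t,a_n)-k_\Omega(a_0,a_n)\big]$ is computed in the disk and found to equal $\ln\frac{1-x_0^2}{|1-x_0|^2}$, a constant independent of $t$. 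Lemma \ref{Lem1} then forces every boundary cluster point of the orbit, in either time direction, to be $a_\infty$. If you replace your Wolff--Denjoy citation by these two steps (and add the routine extension from the base point $(-1,0)$ to a general $(w,z)\in D$ using $L^t$-invariance and Lemma \ref{Lem2}), your argument becomes complete and coincides with the paper's.
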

On the other hand, R. Greene and S. G. Krantz \cite{GK} suggested the following conjecture.

\noindent
{\bf Greene-Krantz Conjecture.} {\it If the automorphism group $Aut(\Omega)$ of a smoothly bounded pseudoconvex domain $\Omega \Subset \mathbb C^n$ is noncompact, then any orbit accumulation point is of finite type. } 

The main results around this conjecture are due to R. Greene and S. G. Krantz \cite{GK}, K. T. Kim \cite{Ki1}, K. T. Kim and S. G. Krantz \cite{Ki2},\cite{Ki3}, H. B. Kang \cite{Ka}, M. Landucci \cite{La}, J. Byun and H. Gaussier \cite{By}. 

Let $P_\infty(\partial \Omega)$ be the set of all points in $\partial \Omega$ of infinite type. In \cite{La}, M. Landucci proved that the automorphism group of a domain is compact if $P_\infty(\partial \Omega)$ is a closed interval on the real "normal" line in a complex space with dimension $2$. In \cite{By}, J. Byun and H. Gaussier also proved that there is no parabolic boundary point if $P_\infty(\partial \Omega)$ is a closed interval transerval to the complex tangent space at one boundary point. For the case which $P_\infty(\partial \Omega)$ is a closed curve on the boundary, is not there exist any parabolic boundary point? In \cite{Ka}, H. B. Kang showed that the automorphism group of the bounded domain $ \Omega=\{(z,w)\in \mathbb C^2: |z|^2+P(w)<1\}$ is compact, where the function $P(w)$ is smooth and vanishes to infinite order at $w=0$. In \cite{Ki3}, K. T. Kim and S. G. Kantz considered the pseudoconvex domain $\Omega\subset \mathbb C^2$ where the local defining function of $\Omega $ in a neighborhood of the point of infinite type $(0,0)$ takes the form $\rho(z)=\text{Re} z_1 +\psi(z_2,\text{Im} z_1)$. They pointed out that the origin is not a parabolic boundary point ( see \cite[Theorem 4.1]{Ki3}). Their proof based on the "fact" that the function $\psi$ vanishes to infinite order at $(0,0)$. But, in general, it is not true, e.g., $\psi(z_2, \text{Im} z_1)=e^{{-1}/{|z_2|^2}}+ |z_2|^4. |\text{Im} z_1|^2$.

The second aim of this paper is to prove the following theorem which shows that there is no parabolic boundary point of infinite type if $P_\infty(\partial \Omega)$ is a closed curve.
\begin{theorem}\label{T2}
Let $\Omega\subset \mathbb C^2$ be a bounded, pseudoconvex domain in $\mathbb C^2$ and $0\in \partial \Omega$. Assume that 
\begin{enumerate}
\item[(1)] $\partial \Omega$ is $\mathcal C^\infty$- smooth satisfying Bell's condition R, 
\item[(2)] There exists a neighborhood $U$ of $0\in \partial \Omega$ such that
$$\Omega\cap U=\{(z_1,z_2)\in \mathbb C^2:\rho=\text{Re} z_1+ P(z_2)+Q(z_2, \text{Im} z_1)<0\},$$
where $P$ and $Q$ satisfy the following comditions
\begin{enumerate}
\item[(i)] $P$ is smooth, subharmonic and strictly positive at all points different
from the origin, where it vanishes at any order, i.e., $\lim\limits_{z_2 \to 0} \dfrac{P(z_2)}{|z_2|^N}=0,\; \forall N\geq 0,$
\item[(ii)] $Q(z_2, \text{Im} z_1)$ is smooth and can be written as $Q(z_2, \text{Im} z_1)=|z_2|^4|\text{Im}|^2R(z_2,\text{Im}z_1)$ with some smooth function $R(z_2,\text{Im}z_1)$.
\end{enumerate}  
\end{enumerate}  
Then, $(0,0)$ is not a parabolic boundary point.
\end{theorem}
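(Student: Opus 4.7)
We argue by contradiction. Suppose $(0,0)$ is a parabolic boundary point, with a one-parameter subgroup $\{h^t\}_{t\in\mathbb R}\subset\mathrm{Aut}(\Omega)$ so that $h^t(z)\to(0,0)$ for every $z\in\Omega$ as $t\to\pm\infty$. By Bell's condition R each $h^t$ extends $\mathcal C^\infty$-smoothly to $\overline\Omega$, so the holomorphic infinitesimal generator $H=h_1(z_1,z_2)\partial_{z_1}+h_2(z_1,z_2)\partial_{z_2}$ is smooth up to $\overline\Omega\cap U$ and $H(0,0)=0$. The main tool is the tangency identity $2\,\text{Re}(H\rho)=0$ on $\partial\Omega\cap U$; using $\rho_{z_1}=\tfrac12-\tfrac{i}{2}Q_y$ and $\rho_{z_2}=P_{z_2}+Q_{z_2}$ this reads
\[
\text{Re}(h_1)+Q_y\,\text{Im}(h_1)+2\,\text{Re}\bigl(h_2(P_{z_2}+Q_{z_2})\bigr)=0.
\]
I would parametrize $\partial\Omega\cap U$ by $(z_2,y)\in\mathbb C\times\mathbb R$ via $z_1=iy-P(z_2)-Q(z_2,y)$ and expand in Taylor series at the origin. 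Two features of the defining function drive the analysis: (i) because $P$ is flat at $z_2=0$, $P_{z_2}$ has identically vanishing Taylor jet at $0$, so the term $2\,\text{Re}(h_2P_{z_2})$ drops out of every finite-order equation; and (ii) the factor $|z_2|^4|\text{Im}\,z_1|^2$ in $Q$ makes $Q$, $Q_y$ and $Q_{z_2}$ vanish to order at least $5$ in $(z_2,y)$. Consequently, at total degree $N\le 5$ the equation collapses to $\text{Re}(h_1(iy,z_2))\equiv 0$. Writing $h_1=\sum a_{jk}z_1^jz_2^k$ and using the $\mathbb R$-linear independence of the polynomials $y^j\text{Re}(i^jz_2^k)$ and $y^j\text{Im}(i^jz_2^k)$, this forces $a_{jk}=0$ whenever $k\ge 1$ and $j+k\le 5$, together with $h_1(iy,0)\in i\mathbb R$ through order $5$.

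The next step is an induction on homogeneous degree. At step $N+1\ge 6$ the corrections $Q_y\,\text{Im}(h_1)$ and $\text{Re}(h_2Q_{z_2})$ involve only Taylor coefficients of $h_1,h_2$ at degrees $\le N-4$, and so act as a known inhomogeneity against the degree-$(N+1)$ part of $\text{Re}(h_1(iy,z_2))$. The image of the map $(a_{jk})_{j+k=N+1}\mapsto\text{Re}(h_1(iy,z_2))|_{\deg=N+1}$ is a proper subspace of the real polynomials of degree $N+1$ in $(y,x_2,y_2)$: it contains no monomial involving mixed $z_2\bar z_2$ powers, whereas every inhomogeneity term carries an explicit $|z_2|^{2\ell}$ factor by virtue of the factorization $Q=|z_2|^4|\text{Im}\,z_1|^2R$. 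Matching the coefficients of these mixed monomials therefore forces new vanishing conditions on the lower-order coefficients of $h_2$. Carrying the iteration through all degrees yields $h_2(z_1,0)\equiv 0$ as a formal power series, whence by holomorphy $h_2(\cdot\,,0)\equiv 0$ on the one-dimensional slice; simultaneously, modulo flat functions, $h_1(z_1,z_2)=f(z_1)$ with $f(i\mathbb R)\subset i\mathbb R$. This inductive bookkeeping -- showing that at every order the mixed-monomial obstruction forces vanishing of fresh Taylor coefficients of $h_2$ -- is the technical heart of the proof and its principal obstacle.

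Once this structure is established, the slice $\{z_2=0\}\cap\Omega$ is invariant under the flow and the induced dynamics is the holomorphic one-dimensional flow $\dot z_1=f(z_1)$ on $\{\text{Re}\,z_1<0\}$ with $f(0)=0$ and $f(i\mathbb R)\subset i\mathbb R$. Writing $f(z_1)=-i\phi(iz_1)$ produces a holomorphic vector field $\phi$ at $0$ with $\phi(0)=0$ and real Taylor coefficients, acting on the lower half-plane $\{\text{Im}\,w<0\}$ under $w=iz_1$. The hypothesis that every such orbit accumulates at $w=0$ for both $t\to+\infty$ and $t\to-\infty$ is then combined with the pseudoconvexity of $\rho$ -- via the Hopf lemma applied to $\rho$ along the flow, and the subharmonicity and strict positivity of $P$ away from $z_2=0$ -- to derive incompatible sign conditions on the leading Taylor coefficient of $\phi$ in the two time directions; this yields the required contradiction.
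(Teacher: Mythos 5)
Your overall framing (smooth extension of the generator $H$ via condition R, the tangency identity $\mathrm{Re}(H\rho)=0$ on $\partial\Omega\cap U$, and the reduction to $h_1=f(z_1)$ with $f(0)=0$, $f(i\mathbb R)\subset i\mathbb R$ and $h_2(\cdot,0)\equiv 0$) matches the paper's first steps, but the engine you propose --- an order-by-order formal Taylor analysis followed by a study of the one-dimensional flow on the slice $\{z_2=0\}$ --- cannot produce the contradiction, because both steps are blind to $P$. As you yourself observe, every Taylor jet of $P$ and of $P_{z_2}$ at $z_2=0$ vanishes, so your formal induction only ever recovers structural information that is \emph{consistent} with parabolicity: the finite-type models $\{\mathrm{Re}\,z_1+|z_2|^{2m}<0\}$ carry the parabolic field $H=iz_1^2\partial_{z_1}+\tfrac{i}{m}z_1z_2\partial_{z_2}$, whose $h_1$ has purely imaginary Taylor coefficients, whose $h_2$ is divisible by $z_2$, and whose slice dynamics $\dot z_1=iz_1^2$ on $\{\mathrm{Re}\,z_1<0\}$ fixes $0$, is tangent to $i\mathbb R$, and attracts every orbit to $0$ as $t\to\pm\infty$. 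Since $|z_2|^{2m}$ is also subharmonic and strictly positive off the origin, any argument using only the slice flow, the Hopf lemma, and those two properties of $P$ would ``prove'' the nonexistence of parabolic points on finite-type models as well, which is false. The contradiction must come from the \emph{exact}, non-formal cancellation between the flat quantities $h_1(-P(z_2))$ and $z_2P'(z_2)h_2(-P(z_2),z_2)$, i.e. precisely from the infinite-order vanishing of $P$, which your proposal never uses analytically.

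This is what the paper does and what your proposal is missing. The tangency identity is restricted to the real curve $\{(-P(z_2),z_2)\}\subset\partial\Omega$ (where $Q$ vanishes identically because of the factor $|\mathrm{Im}\,z_1|^2$), giving the exact relation $\mathrm{Re}\bigl[\tfrac12h_1(-P(z_2))+z_2P'(z_2)h_2(-P(z_2),z_2)\bigr]=0$; if some coefficient of $h_1$ had nonzero real part, isolating dominant terms would yield a functional equation of the form $\mathrm{Re}[aP^{n-l}(z_2)+bz_2^kP'(z_2)]=o(P^{n-l}(z_2))$ with $\mathrm{Re}\,a\neq0$, $b\neq0$, and the paper's Lemmas 3.1--3.3 exclude this by integrating an ODE for $\ln P$ (respectively $P^{-n}$) along suitably chosen rays or spirals and showing $P$ would then vanish to only finite order at $0$. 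These ODE lemmas, exploiting flatness in an essential and non-formal way, are the actual technical heart of the proof, and nothing in your plan substitutes for them; the ``inductive bookkeeping'' you identify as the main obstacle is not merely laborious but structurally incapable of reaching the conclusion. (The paper also needs its Lemma 3.4 --- CR-invariance of the curve of non--strictly-pseudoconvex points $\{(it,0)\}$ under boundary values of automorphisms --- to justify the special form of $H$; your formal derivation of that form should be checked against this, since it is where condition R and the type stratification of the boundary actually enter.)
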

\begin{remark}
By a simple computation, we see that $(0,0)$ is of infinite type, $(it,0)$ with $t$ small enough, are of type greater than or equal to $4$ and the other boundary points in a neighborhood of the origin, are strictly pseudoconvex.   
\end{remark}

{\bf Acknowledgement.} We would like to thank Professor Do Duc Thai and Dr Dang Anh Tuan for their precious discusions on this material.

\section{Existence of the parabolic boundary point}
\setcounter{theorem}{0}
In this section, we prove Theorem \ref{T1}. To do this, first of all, we recall some notations and some definitions.

 For two domains $D, \Omega$ in $\mathbb C^n$, denote by $Hol(D,\Omega)$ the set of all holomorphic maps from $D$ into $\Omega$. Moreover, we donote by $d(z,\Omega)$ the distance from the point $z\in \Omega$ to $\partial \Omega$ and by $\Delta$ the open unit disk in the complex plane.    
\begin{define} Let $p, q$ be two points in a domain $\Omega$ in $\mathbb C^n$ and $X$ a vector in $\mathbb C^n$.
\begin{enumerate}
\item[(a)]  The {\bf Kobayashi infinitesimal pseudometric} $F_\Omega (p, X)$ is defined by 
$$F_\Omega(p,X)=\inf \{\alpha>0|\exists g\in Hol(\Delta,\Omega), g(0)=0, g'(0)=X/\alpha\}$$
 \item[(b)] The {\bf Kobayashi pseudodistance} $k_\Omega (p, q)$ is defined by 
 $$k_\Omega (p, q) = \inf \int_a^b F_\Omega(\gamma (t), \gamma'(t))dt, $$
where the infimum is taken over all differentiable curves $\gamma : [a,b]\to \Omega$ such that $\gamma(a)=p$ and $\gamma(b)=q$.
\end{enumerate}
\end{define}
 Before proceeding to prove the Theorem \ref{T1}, we need some following lemmas.
\begin{lemma}\label{Lem3} Let $\Omega$ be a $\mathcal C^1$-smooth, bounded, strictly geometrically convex domain in $\mathbb C^2$. Then, there exists $\epsilon_0 >0$ such that for any $\eta\in \partial \Omega$ and for any $\epsilon \in (0,\epsilon_0]$, there exists a constant $K(\epsilon)>0$ such that the following holds.
 $$k_\Omega(z,w)\geq -\frac{1}{2}\ln d(z,\partial\Omega)-K(\epsilon)$$
for any $z,w\in\Omega$ with $|z-\eta|<\epsilon, \ |w-\eta|>3\epsilon$.
\end{lemma}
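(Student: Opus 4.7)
My plan is to exploit the convexity of $\Omega$ by pushing the Kobayashi distance down to the half-plane via the complex-linear support function at the foot of the perpendicular from $z$ to $\partial\Omega$, and then to conclude from the explicit formula for the Kobayashi distance on a half-plane. Since $\partial\Omega$ is $\mathcal{C}^1$ and convex, the outward unit real normal $n_\eta\in\mathbb{R}^4$ is well defined and depends continuously on $\eta\in\partial\Omega$. To each $\eta$ I would attach the unique $\mathbb{C}$-linear form $L_\eta:\mathbb{C}^2\to\mathbb{C}$ with $L_\eta(\eta)=0$ and $\mathrm{Re}\,L_\eta(w)=\langle w-\eta,\,n_\eta\rangle_{\mathbb{R}^4}$; Cauchy--Schwarz gives $|L_\eta(w)|\le|w-\eta|$, and convexity yields $\mathrm{Re}\,L_\eta\le 0$ on $\overline{\Omega}$, so $L_\eta$ is a holomorphic map from $\Omega$ into the left half-plane $\mathbb{H}:=\{\zeta\in\mathbb{C}:\mathrm{Re}\,\zeta<0\}$.

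Given $z\in\Omega$ with $|z-\eta|<\epsilon$, I would replace $\eta$ by the closest boundary point $\eta_z$ to $z$ and work with $L_{\eta_z}$. Since $z-\eta_z=-d(z,\partial\Omega)\,n_{\eta_z}$ by the nearest-point characterisation, one has $-\mathrm{Re}\,L_{\eta_z}(z)=d(z,\partial\Omega)$; and since $|\eta-\eta_z|<2\epsilon$, the hypothesis $|w-\eta|>3\epsilon$ forces $|w-\eta_z|>\epsilon$. Now strict geometric convexity enters: for any $\eta'\in\partial\Omega$ the supporting hyperplane at $\eta'$ meets $\overline{\Omega}$ only at $\eta'$, so the continuous function $(\eta',w)\mapsto-\mathrm{Re}\,L_{\eta'}(w)$ is strictly positive on the compact set $\{(\eta',w)\in\partial\Omega\times\overline{\Omega}:|w-\eta'|\ge\epsilon\}$, and compactness yields a uniform constant $\delta(\epsilon)>0$ such that $-\mathrm{Re}\,L_{\eta'}(w)\ge\delta(\epsilon)$ whenever $|w-\eta'|\ge\epsilon$. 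In particular $-\mathrm{Re}\,L_{\eta_z}(w)\ge\delta(\epsilon)$.

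To conclude, the distance-decreasing property of the Kobayashi pseudodistance gives $k_\Omega(z,w)\ge k_{\mathbb{H}}\bigl(L_{\eta_z}(z),L_{\eta_z}(w)\bigr)$. Writing $L_{\eta_z}(z)=-a+ib_1$, $L_{\eta_z}(w)=-c+ib_2$ with $a=d(z,\partial\Omega)$ and $c\ge\delta(\epsilon)$, the explicit Kobayashi distance on $\mathbb{H}$ reduces to
$$k_{\mathbb{H}}\bigl(L_{\eta_z}(z),L_{\eta_z}(w)\bigr)\;\ge\;\frac{1}{2}\ln\frac{(a+c)^2+(b_1-b_2)^2}{4ac}\;\ge\;\frac{1}{2}\ln\frac{c}{4a}\;\ge\;-\frac{1}{2}\ln d(z,\partial\Omega)-K(\epsilon),$$
where $K(\epsilon):=\tfrac12\ln\bigl(4/\delta(\epsilon)\bigr)$ depends only on $\epsilon$. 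Taking $\epsilon_0<\mathrm{diam}(\Omega)/3$ makes the hypothesis $|w-\eta|>3\epsilon$ nonvacuous and closes the argument. The main obstacle is the uniform lower bound $\delta(\epsilon)>0$: one must convert the qualitative statement of strict convexity into a quantitative estimate uniform in $\eta\in\partial\Omega$, which relies essentially on the $\mathcal{C}^1$ hypothesis making $\eta\mapsto n_\eta$ continuous, and hence $(\eta,w)\mapsto-\mathrm{Re}\,L_\eta(w)$ jointly continuous on $\partial\Omega\times\overline{\Omega}$.
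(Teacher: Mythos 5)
Your proof is correct, but it takes a genuinely different route from the paper's. The paper invokes (without construction) a continuous family of holomorphic peak functions $F(\cdot,\eta):\Omega\to\Delta$ with the normal-direction estimate $|1-F(\eta+t\vec n_\eta,\eta)|\le At$, picks an intermediate point $z_0$ at distance $3\epsilon$ from $\eta$ on an almost length-minimizing curve joining $z$ to $w$, and bounds $k_\Omega(z,w)\ge k_\Omega(z,z_0)-1\ge k_\Delta(F(z,\tilde\eta),F(z_0,\tilde\eta))$, using $|1-F(z,\tilde\eta)|\le A\,d(z,\partial\Omega)$ together with $\sup\{|F(z_0,\eta')|:|z_0-\eta'|\ge\epsilon\}=M(\epsilon)<1$. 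You instead build the one-dimensional reduction explicitly from convexity: the $\mathbb{C}$-linear support functional $L_{\eta_z}$ at the nearest boundary point maps $\Omega$ into the half-plane, $-\mathrm{Re}\,L_{\eta_z}(z)$ equals $d(z,\partial\Omega)$ exactly, and strict convexity plus compactness gives the uniform separation $-\mathrm{Re}\,L_{\eta'}(w)\ge\delta(\epsilon)$ for $|w-\eta'|\ge\epsilon$; the half-plane formula then finishes. Your version is more self-contained (it replaces the peak-function black box by an explicit linear construction, which is legitimate precisely because the domain is convex rather than merely pseudoconvex), applies $L_{\eta_z}$ to $w$ directly and so dispenses with the auxiliary point $z_0$ and the curve decomposition, and it correctly isolates where each hypothesis is used: $\mathcal C^1$ for continuity of $\eta\mapsto\vec n_\eta$ and hence for the compactness argument, strict convexity for positivity of $-\mathrm{Re}\,L_{\eta'}$ away from $\eta'$. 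The paper's peak-function formulation is the one that would generalize beyond the convex setting, but for the statement as given your argument is complete and arguably cleaner.
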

\begin{proof}
Since $\partial\Omega$ is strictly geometrically convex, there exists a family of holomorphic peak functions
\begin{equation*}
\begin{split}
F: \Omega\times \partial\Omega & \to    \mathbb C  \\
             (z,\eta)  & \mapsto   F(z,\eta)
\end{split}
\end{equation*}
such that
 \begin{enumerate}
\item[(i)] $F$ is continuous and $F(.,\eta)$ is holomorphic;
\item[(ii)] $|F|<1$;
\item[(iii)] There exist a positive constant $A$ and a positive constant $\epsilon_0$ such that $|1-F(\eta+t\Vec{n}_\eta,\eta)|\leq At$ for $t\in [0,\epsilon_0]$, where $\Vec{n}_\eta$ is the normal to $\partial\Omega$ at $\eta$.
\end{enumerate}  
Taking $\epsilon_0>0$ small enough, we may assume that $\partial B(\eta,3\epsilon)\cap \partial\Omega\ne \emptyset$ for $\epsilon\leq \epsilon_0$ and for any $\eta\in \partial\Omega$.

Let $\gamma$ be some smooth part in $\Omega$ such that $\gamma(0)=z,\ \gamma(1)=w$ and $\int\limits_0^1 F_\Omega[\gamma(t),\gamma'(t)] dt\leq k_\Omega(z,w)+1$. Let $z_0\in \gamma$ such that $|z_0-\eta|=3\epsilon$. We have

\begin{equation}\label{e-q4}
 k_\Omega(z,w)\geq \int_0^1 F_\Omega[\gamma(t),\gamma'(t)] dt-1\geq  k_\Omega(z,z_0)-1.
\end{equation}
 Let $\tilde \eta\in \partial\Omega$ be such that $z=\tilde \eta+t\Vec{n}_\eta $, $t>0$. We set $u_0:=F(z_0,\tilde{\eta})$ and $u:=F(z,\tilde \eta)$, $u$ and $u_0$ are in the unit disk $\Delta$. Then we have 

\begin{equation}\label{e-q5}
 k_\Omega(z,z_0)\geq k_\Delta(u,u_0)=\dfrac{1}{2} \ln \frac{1+|\tau_{u_0}(u)|}{1-|\tau_{u_0}(u)|}\geq - \dfrac{1}{2} \ln ( 1-|\tau_{u_0}(u)|),
\end{equation}
where $\tau_{u_0}(u)=\dfrac{u-u_0}{1-\bar{u}_0u}$.
One easily checks that
\begin{equation}\label{e-q6}
 1-|\tau_{u_0}(u)|\leq \frac{2|1-u|}{1-|u_0|}.
\end{equation}
Using the properties of $F$ we have
\begin{equation}\label{e-q7}
 |1-u|=|1-F(z,\tilde\eta)|\leq At=A d(z,b \Omega).
\end{equation}
Since $|\eta-\tilde \eta|\leq |\eta-z|+|z-\tilde\eta|<2\epsilon$ and $|z_0-\eta|=3\epsilon$, we have $|z_0-\tilde \eta|\leq \epsilon$. 

Let $M(\epsilon)=\sup\limits_{\substack{\eta\in \partial\Omega, z\in\Omega\\
                                               |z-\eta|\geq \epsilon    } }|F(z,\eta)|$, $M(\epsilon)<1$ and therefore
\begin{equation}\label{e-q8}
 1-|u_0|=1-|F(z_0,\tilde\eta)|\geq 1-M(\epsilon)>0.
\end{equation}
From (\ref{e-q6}),(\ref{e-q7}) and (\ref{e-q8}) we get
\begin{equation}\label{e-q9}
 1-|\tau_{u_0}(u)|\leq \frac{2A}{1-M(\epsilon)}d(z,\partial\Omega).
\end{equation}
Then form (\ref{e-q4}),(\ref{e-q5}) and (\ref{e-q9}) we obtain
\begin{equation}\label{e-q10}
 k_\Omega(z,w)\geq -\frac{1}{2} \ln d(z,\partial\Omega)-\frac{1}{2}\ln \frac{2A}{1-M(\epsilon)}-1
\end{equation}
and this completes the proof.
\end{proof}
\begin{lemma}\label{Lem2} Let $\Omega$ be a $\mathcal C^1$-smooth, bounded, strictly geometrically convex domain in $\mathbb C^2$ and let $\eta,\eta'\in \partial\Omega$ with $\eta\ne\eta'$. Then there exist $\epsilon>0$ and a constant $K$ suth that 

$$k_\Omega(z,w)\geq -\frac{1}{2}\ln d(z,\partial\Omega)-\frac{1}{2}\ln d(w,\partial\Omega)-K,$$
for any $z\in B(\eta,\epsilon)$ and any $w\in B(\eta',\epsilon)$.
\end{lemma}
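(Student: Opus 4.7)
The strategy is to reduce Lemma 2.2 to two applications of Lemma 2.1, one near $\eta$ and one near $\eta'$. First I fix $\epsilon>0$ small enough so that: (a) $\epsilon\le\epsilon_0$, with $\epsilon_0$ as provided by Lemma 2.1; and (b) the balls $B(\eta,3\epsilon)$ and $B(\eta',3\epsilon)$ are disjoint, for which it suffices to take $\epsilon<|\eta-\eta'|/6$. With this choice, any $z\in B(\eta,\epsilon)$ automatically satisfies $|z-\eta'|>3\epsilon$, and any $w\in B(\eta',\epsilon)$ satisfies $|w-\eta|>3\epsilon$.

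Next I pick a curve that almost realizes the Kobayashi distance: a smooth $\gamma:[0,1]\to\Omega$ with $\gamma(0)=z$, $\gamma(1)=w$ and $\int_0^1 F_\Omega(\gamma(t),\gamma'(t))\,dt\le k_\Omega(z,w)+1$. Because $\gamma$ starts in $B(\eta,\epsilon)$ and ends in $B(\eta',\epsilon)$, which lies outside $B(\eta,3\epsilon)$, it must cross $\partial B(\eta,3\epsilon)$. Set
$$t_1:=\inf\{t\in[0,1]:|\gamma(t)-\eta|=3\epsilon\},\qquad t_2:=\sup\{t\in[0,1]:|\gamma(t)-\eta'|=3\epsilon\},$$
and let $w_0:=\gamma(t_1)$, $z_0:=\gamma(t_2)$. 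By disjointness of the two balls one has $t_1\le t_2$, and the two sub-arcs $\gamma|_{[0,t_1]}$ (from $z$ to $w_0$) and $\gamma|_{[t_2,1]}$ (from $z_0$ to $w$) do not overlap.

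Now I apply Lemma 2.1 to each end. Since $|w_0-\eta|=3\epsilon$ and $|z-\eta|<\epsilon$, Lemma 2.1 gives
$$\int_0^{t_1}F_\Omega(\gamma,\gamma')\,dt\ \ge\ k_\Omega(z,w_0)\ \ge\ -\tfrac12\ln d(z,\partial\Omega)-K(\epsilon),$$
and symmetrically, using $|z_0-\eta'|=3\epsilon$ and $|w-\eta'|<\epsilon$,
$$\int_{t_2}^1 F_\Omega(\gamma,\gamma')\,dt\ \ge\ k_\Omega(w,z_0)\ \ge\ -\tfrac12\ln d(w,\partial\Omega)-K(\epsilon).$$
Adding the two inequalities and using $\int_0^{t_1}+\int_{t_2}^1\le\int_0^1\le k_\Omega(z,w)+1$ yields the desired bound with $K:=2K(\epsilon)+1$.

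The argument is essentially bookkeeping; the only real content is Lemma 2.1 itself together with the observation that a curve from $z$ to $w$ is forced to spend length near each of the two boundary regions. No step should be a serious obstacle, but the geometric choice of $\epsilon$ guaranteeing both that the two $3\epsilon$-balls are disjoint and that $\epsilon\le\epsilon_0$ must be made explicitly so that Lemma 2.1 applies at each endpoint.
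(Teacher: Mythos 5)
Your proof is correct and follows essentially the same route as the paper: take a curve nearly realizing $k_\Omega(z,w)$, split it, and apply Lemma \ref{Lem3} once near $\eta$ and once near $\eta'$, absorbing the error into $K:=2K(\epsilon)+1$. The only (cosmetic) difference is that the paper splits at a single intermediate point $z_0$ lying outside both balls $B(\eta,3\epsilon)$ and $B(\eta',3\epsilon)$, whereas you use two crossing points on the respective spheres; note only that your points satisfy $|w_0-\eta|=3\epsilon$ rather than $>3\epsilon$, which is harmless since the estimate of Lemma \ref{Lem3} persists there (or one perturbs slightly).
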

\begin{proof}

Let $\eta $ and $\eta'$ be two distinct points on $\partial\Omega$. Suppose that $|z-\eta|<\epsilon$ and $|w-\eta'|<\epsilon$ and let $\gamma$ be a $C^1$ part in $\Omega$ connecting $z$ and $w$ such that $k_\Omega(z,w)\geq \int\limits_0^1 F_\Omega[\gamma(t),\gamma'(t)] dt-1$. If $\epsilon $ is small enough we may find $z_0\in \gamma$ such that $|z_0-\eta|>3\epsilon$ and $|z_0-\eta'|>3\epsilon$. Let $z_0=\gamma(t_0)$, then 
\begin{equation*}
\begin{split}
k_\Omega(z,w)&\geq \int\limits_0^{t_0}F_\Omega(\gamma(t),\gamma'(t)) dt + \int_{t_0}^1 F_\Omega(\gamma(t),\gamma'(t)) dt-1   \\
&\geq k_\Omega(z,z_0)+k_\Omega(z_0,w)-1\\
&\geq -\frac{1}{2}\ln d(z,\partial\Omega)-\frac{1}{2} \ln d(w,\partial\Omega)-2 K(\epsilon)-1,
\end{split}
\end{equation*}
where the last inequality is obtained by applying two times Lemma \ref{Lem3} 
\end{proof}
We now recall the difinition of horosphere
\begin{define} Let $a\in \Omega,\ \eta\in \partial\Omega,\ R>0$. The big horosphere with pole $a$, center $\eta$ and radius $R$ in $\Omega$ is defined as follows   
$$F^{\Omega}_a(\eta,R)=\{z\in \Omega: \liminf_{w\to \eta}\big[k_\Omega(z,w)-k_\Omega(a,w)\big]<\frac{1}{2}\ln R\/\}.$$
\end{define}
\begin{lemma}\label{Lem1} If $\Omega$ is a $\mathcal C^1$-smooth, bounded, strictly geometrically convex domain in $\mathbb C^2$, then $\overline{F^{\Omega}_a(\eta,R)}\cap \partial\Omega\subset\{\eta\}$ for any $a\in \Omega,\ \eta\in \partial\Omega,\ R>0$.
\end{lemma}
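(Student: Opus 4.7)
The plan is to argue by contradiction: assume there exists $\eta'\in\overline{F^{\Omega}_a(\eta,R)}\cap\partial\Omega$ with $\eta'\neq\eta$, and choose a sequence $\{z_k\}\subset F^{\Omega}_a(\eta,R)$ with $z_k\to\eta'$. By the very definition of the big horosphere, for each $k$ I can extract a sequence $\{w_{k,m}\}_{m}\subset\Omega$ with $w_{k,m}\to\eta$ as $m\to\infty$ satisfying
$$k_\Omega(z_k,w_{k,m})-k_\Omega(a,w_{k,m})<\tfrac{1}{2}\ln R+1\qquad\text{for all $m$ large enough.}$$

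Next, since $\eta\neq\eta'$, for $k$ and $m$ large enough the pair $(z_k,w_{k,m})$ lies in the situation of Lemma \ref{Lem2} applied at the distinct boundary points $\eta'$ and $\eta$, so that
$$k_\Omega(z_k,w_{k,m})\geq -\tfrac{1}{2}\ln d(z_k,\partial\Omega)-\tfrac{1}{2}\ln d(w_{k,m},\partial\Omega)-K.$$
To kill the $w_{k,m}$-dependent term I would invoke the classical upper bound
$$k_\Omega(a,w)\leq -\tfrac{1}{2}\ln d(w,\partial\Omega)+C(a),$$
valid for $w$ near $\partial\Omega$ in any bounded domain with $\mathcal{C}^1$ boundary (one places, along the inward normal, an analytic disk in $\Omega$ of uniformly controlled radius and applies the Schwarz–Pick inequality). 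Inserting these two estimates into the horosphere inequality above, the $\ln d(w_{k,m},\partial\Omega)$ terms cancel and one is left with
$$-\tfrac{1}{2}\ln d(z_k,\partial\Omega)\leq \tfrac{1}{2}\ln R+1+K+C(a),$$
so $d(z_k,\partial\Omega)$ is bounded below by a positive constant independent of $k$, contradicting $z_k\to\eta'\in\partial\Omega$.

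The main obstacle I anticipate is verifying the upper bound on $k_\Omega(a,w)$ with a constant $C(a)$ that is uniform in $w$ in a neighborhood of $\eta$: one must produce, at each boundary point near $\eta$, an embedded analytic disk in $\Omega$ whose radius and placement are controlled solely by the $\mathcal{C}^1$-geometry of $\partial\Omega$. All remaining steps — the diagonal extraction $w_{k,m}\to\eta$ from the $\liminf$ in the definition of $F^{\Omega}_a(\eta,R)$, the application of Lemma \ref{Lem2}, and the final cancellation — are routine quantitative bookkeeping once this uniform upper bound is in hand.
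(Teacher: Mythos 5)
Your proposal is correct and follows essentially the same route as the paper: contradiction via a point $\eta'\neq\eta$ in $\overline{F^{\Omega}_a(\eta,R)}\cap\partial\Omega$, the lower bound of Lemma \ref{Lem2} for $k_\Omega(z,w)$ with $z$ near $\eta'$ and $w$ near $\eta$, the classical upper bound $k_\Omega(a,w)\leq -\frac{1}{2}\ln d(w,\partial\Omega)+C(a)$, and cancellation forcing $d(z_k,\partial\Omega)$ to stay bounded below. Your diagonal extraction of $w_{k,m}$ is a slightly more careful bookkeeping of the same argument the paper runs with single sequences $z_n\to\eta'$, $w_n\to\eta$.
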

\begin{proof} If there exists $\eta'\in \partial\Omega\cap \overline{F^{\Omega}_a(\eta,R)}$ then we can find a sequence $\{z_n\}\subset \Omega$ with $z_n \to \eta'$ and a sequence $\{w_n\}\subset \Omega$ with $z_n \to \eta$ such that 
\begin{equation}\label{e-q1}
k_\Omega(z_n,w_n)-k_\Omega(a,w_n)<\frac{1}{2}\ln R
\end{equation}
By Lemma \ref{Lem2}, the following estimate occurs if $\eta\ne\eta'$ and $n$ great enough.

\begin{equation}\label{e-q2}
k_\Omega(z_n,w_n)\geq -\frac{1}{2}\ln d(z_n,\partial\Omega)-\frac{1}{2}\ln d(w_n,\partial\Omega)-K,
\end{equation}
 where $K$ is a constant.

On the other hand, we have 
\begin{equation}\label{e-q3}
k_\Omega(a,w_n)\leq -\frac{1}{2}\ln d(w_n,\partial\Omega)+K(a),
\end{equation}
 since $\partial\Omega$ is smooth. 

From (\ref{e-q1}), (\ref{e-q2}) and (\ref{e-q3}) we get

\begin{equation}\label{e-q31}
-\frac{1}{2}\ln d(z_n,\partial\Omega)\lesssim 1,
\end{equation}
 which is absurd.
\end{proof}

\begin{proof}[Proof of Theorem \ref{T1}]
Set $a_n:=\psi^{-1}(-t_n,0)$ where $\lim t_n=+\infty$, after taking a subsequence we may assume that $\lim a_n=a_\infty\in \partial\Omega$. We may also assume that $a_\infty$ is the origin in $\mathbb C^2$. 

Set $b_t:=\psi^{-1}(-1+it,0)$, according to Lemma \ref{Lem1}, it suffices to show that there exists $R_0>0$ such that 
\begin{equation}\label{e-q11}
\{b_t: t\in \mathbb R \}\subset F^\Omega_{a_0}(a_\infty, R_0).
\end{equation}
Since $a_n\to a_\infty$, we have 
\begin{equation}\label{e-q12}
\liminf_{w\to a_\infty}\big[k_\Omega(b_t,w)-k_\Omega(a_0,w)\big]\leq \liminf_{n\to +\infty}\big[k_\Omega(b_t,a_n)-k_\Omega(a_0,a_n)\big].
\end{equation}
Then by the invariance of the Kobayashi metric and the convexity of $D$ we have  
\begin{equation}\label{e-q13}
\begin{split}
k_\Omega(b_t,a_n)-k_\Omega(a_0,a_n)&=k_D[(-1+it,0),(-t_n,0)]-k_D[(-t_0,0),(-t_n,0)]\\
                                      &=k_H(-1+it,-t_n)-k_H(-t_0,-t_n),
\end{split}
\end{equation}

where $\mathrm H $ is the left half plane $\{\text{Re} w<0\}$.

Let $\sigma:\mathrm H\to \Delta$ be a biholomorphism between $\mathrm H$ and the disk $\Delta$ given by $\sigma(w)=\dfrac{w+1}{w-1}$. Set $z_t:=\sigma(-1+it) =\dfrac{it}{-2+it}$ and $x_n:= \sigma(-t_n)=\dfrac{-t_n+1}{-t_n-1}$. Then we have
\begin{equation}\label{e-q14}
\begin{split}
&k_{\mathrm H}(-1+it,-t_n)-k_{\mathrm H}(-t_0,-t_n)=k_\Delta (z_t,x_n)-k_\Delta (x_0,x_n)\\
&=\ln\Big(\frac{|1-x_nz_t|+|x_n-z_t|}{|1-x_nz_t|-|x_n-z_t|}.\frac{|1-x_nx_0|+|x_n-x_0|}{|1-x_nx_0|-|x_n-x_0|}\Big)\\
&=\ln\Big(\frac{|1-x_nx_0|+|x_n-x_0|}{|1-x_nz_t|-|x_n-z_t|}.\frac{|1-x_nz_t|+|x_n-z_t|}{|1-x_nx_0|-|x_n-x_0|}\Big)\\
&=\ln\Big(\frac{|1-x_nx_0|^2-|x_n-x_0|^2}{|1-x_nz_t|^2-|x_n-z_t|^2}.\Big[\frac{|1-x_nz_t|+|x_n-z_t|}{|1-x_nx_0|-|x_n-x_0|}\Big]^2\Big)\\
&=\ln\Big(\frac{1-x_0^2}{1-|z_t|^2}.\Big[\frac{|1-x_nz_t|+|x_n-z_t|}{|1-x_nx_0|-|x_n-x_0|}\Big]^2\Big).
\end{split}
\end{equation}
From (\ref{e-q13}) and (\ref{e-q14})we have
\begin{equation}\label{e-q15}
\begin{split}
\lim_{n\to\infty}\big[k_\Omega(b_t,a_n)-k_\Omega(a_0,a_n)\big]&=\ln\Big(\frac{1-x_0^2}{1-|z_t|^2}.\frac{|1-z_t|^2}{|1-x_0|^2}\Big)\\
&=\ln \frac{1-x_0^2}{|1-x_0|^2}.
\end{split}
\end{equation}
Finally, (\ref{e-q11}) follows directly from (\ref{e-q12}) and (\ref{e-q15}) when $\ln \frac{1-x_0^2}{|1-x_0|^2}<\frac{1}{2} \ln R_o$.
\end{proof}
\section{non-existence of the parabolic boundary point of infinite type} 
\setcounter{theorem}{0}
Let $\Omega$ be a domain satisfying conditions given in Theorem \ref{T2}. In this section, the non-existence of the parabolic boundary point of infinite type of $\Omega$ is proved. First of all, we need some following lemmas.
\begin{lemma}\label{l1} There do not exist $a,b\in \mathbb C$ with $\text{Re} a\ne 0$ and $b\ne 0 $ such that
\begin{equation}\label{eq1}
\text{Re} [a P(z)+b z^k P'(z)]=\gamma(z) P(z),
\end{equation}
for some $k\in \mathbb N,\; k>1$ and for every $|z|<\epsilon_0$ with $\epsilon_0>0$ small enough, where $\gamma(z)$ is smooth and $\gamma(z) \to 0$ as $z\to 0$.
\end{lemma}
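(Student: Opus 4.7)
My plan is to reinterpret the identity as a transport equation along the flow of a real vector field on $\mathbb C$, and then derive a contradiction from the constraint that $P$ vanishes at the origin while being strictly positive elsewhere. Since $P$ is real-valued we have $\text{Re}(aP)=(\text{Re}\,a)P$ and $\overline{z^k\partial_z P}=\bar z^k\partial_{\bar z}P$, so writing $c:=\text{Re}\,a\neq 0$ the hypothesized identity is equivalent to
\begin{equation*}
bz^kP_z+\bar b\bar z^kP_{\bar z}=2(\gamma-c)P
\end{equation*}
on $\{|z|<\epsilon_0\}$. The left-hand side is $XP$ for the real vector field $X:=bz^k\partial_z+\bar b\bar z^k\partial_{\bar z}$.

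Next I would study the integral curves of $X$, which solve the complex ODE $\dot z=bz^k$. Since $k\geq 2$ this integrates explicitly to $z(t)^{1-k}=z_0^{1-k}+(1-k)bt$. Whenever the affine line $\{z_0^{1-k}+(1-k)bt:t\in\mathbb R\}\subset\mathbb C$ avoids the origin (a generic condition on $z_0$, since $b\ne 0$), the trajectory $z(t)$ is defined for all $t\in\mathbb R$, stays bounded with $|z(t)|\leq d^{-1/(k-1)}$ where $d$ is the distance from $0$ to the line, and satisfies $z(t)\to 0$ as $t\to\pm\infty$. By scaling we can ensure such a trajectory lies inside any preassigned disk $\{|z|<\epsilon_0\}$; for $k=2$ these trajectories are literally circles through the origin.

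Finally, along such a trajectory $\tfrac{d}{dt}\log P(z(t))=2(\gamma(z(t))-c)$. After shrinking $\epsilon_0$ so that $|\gamma|<|c|/2$ on the disk and choosing the trajectory inside it, the quantity $\gamma-c$ keeps a constant sign opposite to that of $c$ with $|\gamma-c|\geq|c|/2$ throughout. Assuming without loss of generality that $c>0$, integration from $0$ to $t<0$ then yields
\begin{equation*}
\log P(z(t))\geq\log P(z_0)+c|t|\longrightarrow+\infty\quad\text{as }t\to-\infty,
\end{equation*}
which contradicts $P(z(t))\to P(0)=0$ (this limit being forced by $z(t)\to 0$ and continuity of $P$ with $P(0)=0$). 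The case $c<0$ is symmetric, obtained by reversing the direction of time.

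The only step requiring any genuine calculation is the explicit solution of $\dot z=bz^k$ together with the verification that small trajectories remain in $\{|z|<\epsilon_0\}$; I expect this to be the most labor-intensive but also the most routine part of the argument, while the conceptual heart of the proof is the rewriting of the identity as a one-dimensional transport equation for $\log P$ along a flow whose integral curves form homoclinic loops at the origin.
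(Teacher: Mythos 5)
Your proof is correct, and at bottom it exploits the same mechanism as the paper's: rewrite the identity as a transport equation for $\ln P$ along the characteristics of the real vector field $2\,\text{Re}(bz^k\partial_z)$, follow a characteristic into the origin, and conclude that $\ln P$ is forced to $+\infty$ there, contradicting $P(0)=0$. The difference is in which characteristic is used and how it is handled. The paper passes to polar coordinates and singles out the one \emph{invariant radial ray} $\varphi=\varphi_0$ with $(k-1)\varphi_0+\psi\equiv 0 \pmod{2\pi}$, along which the first-order PDE collapses to the scalar ODE $g'(r)=-Rr^{-k}(1+o(1))$; integrating in $r$ gives the quantitative blow-up $\ln P(re^{i\varphi_0})\gtrsim r^{1-k}$. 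You instead integrate the flow $\dot z=bz^k$ explicitly via the substitution $w=z^{1-k}$ and use the generic trajectories, which are homoclinic loops at the origin, needing only the qualitative facts $z(t)\to 0$ as $t\to\pm\infty$ and $\tfrac{d}{dt}\ln P\le -c$. Your version is somewhat more robust (it works along every trajectory whose $w$-line misses the origin, and it recovers the same rate $\ln P\gtrsim |z|^{1-k}$ if one notes $|t|\sim|z(t)|^{1-k}$), at the cost of the branch bookkeeping for $w\mapsto w^{1/(1-k)}$; the paper's radial choice keeps everything one-dimensional from the outset. Two small points you should make explicit in a final write-up, neither of which is a gap: the continuous branch of the $(1-k)$-th root exists along the affine line $\{z_0^{1-k}+(1-k)bt\}$ because that line is simply connected and avoids $0$, and $\ln P(z_0)$ is finite because $P$ is strictly positive away from the origin.
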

\begin{proof}
Suppose that there exist $a,b\in \mathbb C$ with $\text{Re} a\ne0$ and $ b\ne 0$ such that 
\begin{equation}\label{eq2}
\text{Re} [a P(z)+b z^k P'(z)]=\gamma(z) P(z),
\end{equation}
for some $k\in \mathbb N,\; k>1$ and for every $|z|<\epsilon_0$ with $\epsilon_0>0$ small enough. This equation is equivalent to
\begin{equation}\label{eq3}
1 +\text{Re}[ \frac{b}{\text{Re} a} z^k \dfrac{P'(z)}{P(z)}]=\gamma_1(z), \quad \forall \; 0<|z|<\epsilon_0,
\end{equation}
where $ \gamma_1(z)=\gamma(z)/ \text{Re} a$. Let $F(z)=\ln P(z)$ and write $ z=r e^{i\varphi}, \; \dfrac{b}{2 \text{Re} a}=\dfrac{1}{R}e^{i\psi} $. Then, by (\ref{eq3}), we get
$$\frac{\partial F}{\partial x}(z)\cos (k\varphi+\psi)+ \frac{\partial F}{\partial y}(z) \sin(k\varphi+\psi)=-\frac{R}{r^k}+ \frac{R}{r^k}\gamma_1(z). $$
If we set $\varphi_0=\dfrac{2\pi-\psi}{k-1}$, then
$$\frac{\partial F}{\partial x}(r e^{i\varphi_0})\cos (\varphi_0)+ \frac{\partial F}{\partial y}(r e^{i\varphi_0}) \sin(\varphi_0)=-\frac{R}{r^k}+ \frac{R}{r^k}\gamma_1(re^{i\varphi_0}). $$
Let $g(r):= F(re^{i\varphi_0})$. It is easy to see that
 $$g'(r)=-\frac{R}{r^k}+ \frac{R}{r^k}\gamma_1(re^{i\varphi_0} ).$$
Let $h(r):=g(r)+\dfrac{R}{1-k} \dfrac{1}{r^k-1}$. Then 
$$h'(r)=\frac{R}{r^k}\gamma_1(re^{i\varphi_0} ).$$
We may assume that there exists $r_0$ small enough such that $|h'(r)|\leq\dfrac{R}{2r^k}$, for every $0<r\leq r_0$. Thus, we have the following estimate
\begin{equation*}
\begin{split}
|h(r)|&\leq |h(r_0)|+\big | \int_{r_0}^r|h'(r)|  dr \big|\\
&\leq |h(r_0)|+\frac{R}{2}\big | \int_{r_0}^r r^{-k}  dr \big|\\
&\leq |h(r_0)|-\frac{R}{2(k-1)} r_0^{1-k}+\frac{R}{2(k-1)} r^{1-k}.\\
\end{split}
\end{equation*}
 Hence, 
\begin{equation*}
g(r)\geq \frac{R}{k-1}r^{1-k}-|h(r_0)|+\frac{R}{2(k-1)} r_0^{1-k}-\frac{R}{2(k-1)} r^{1-k}.
\end{equation*}
It implies that $\lim\limits_{r\to 0^+}g(r)=+\infty$. This means that $P(r e^{i\varphi_0})\not \to 0$ as $r\to 0^+$. It is impossible.
\end{proof}
\begin{lemma}\label{l2} There do not exist $a,b\in \mathbb C$ with $\text{Re} a\ne 0$ and $b\ne 0 $ such that
\begin{equation}\label{eq4}
\text{Re} [a P^{n+1}(z)+b z^k P'(z)]=\gamma(z) P^{n+1}(z),
\end{equation}
for some $k\in \mathbb N,\; k>1$ and for every $|z|<\epsilon_0$ with $\epsilon_0>0$ small enough, where $\gamma(z)\to 0$ as $z\to 0$.
\end{lemma}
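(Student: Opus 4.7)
The plan is to follow the proof of Lemma \ref{l1} essentially verbatim, with one substitution. Lemma \ref{l1} relies on the real-valued primitive $F(z)=\ln P(z)$, whose Wirtinger derivative $\partial F/\partial z$ is exactly $P'(z)/P(z)$. For Lemma \ref{l2} I would take instead
\[
F(z) := -\frac{1}{n\,P^n(z)},
\]
which satisfies $\partial F/\partial z = P'(z)/P^{n+1}(z)$. This is precisely the ratio that appears after dividing equation (\ref{eq4}) through by $(\mathrm{Re}\,a)\cdot P^{n+1}(z)$: because $P$ is real-valued, the imaginary part of $a$ drops out, and one obtains
\[
1 + \mathrm{Re}\!\left[\frac{b}{\mathrm{Re}\,a}\,z^k\,\frac{P'(z)}{P^{n+1}(z)}\right] = \gamma_1(z),
\]
with $\gamma_1=\gamma/\mathrm{Re}\,a\to 0$ as $z\to 0$, in exactly the shape of equation (\ref{eq3}).

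From here every step of Lemma \ref{l1} carries over without change. Passing to polar coordinates $z=re^{i\varphi}$, writing $b/(2\,\mathrm{Re}\,a)=R^{-1}e^{i\psi}$, and choosing $\varphi_0=(2\pi-\psi)/(k-1)$ so that $k\varphi_0+\psi\equiv\varphi_0\pmod{2\pi}$, the displayed equation becomes
\[
g'(r) = -\frac{R}{r^k} + \frac{R}{r^k}\,\gamma_1(re^{i\varphi_0}), \qquad g(r):=F(re^{i\varphi_0}).
\]
The same auxiliary function $h(r)=g(r)+\frac{R}{(1-k)\,r^{k-1}}$ and the identical integration estimate then force $g(r)\to+\infty$ as $r\to 0^+$.

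This is where the argument departs from Lemma \ref{l1}, but in a trivial way: since $P(z)>0$ for $z$ in a punctured neighborhood of the origin, the new primitive $F(z)=-1/(n\,P^n(z))$ is uniformly negative there, so its restriction $g(r)$ is bounded above by $0$ and cannot tend to $+\infty$. This contradiction completes the proof. I do not foresee any genuine obstacle; the only point requiring thought is the selection of the correct primitive for $P'/P^{n+1}$, after which Lemma \ref{l1}'s machinery applies mechanically, and the final contradiction is, if anything, cleaner than before since one exploits an upper bound on $F$ rather than the vanishing of $P$ at the origin.
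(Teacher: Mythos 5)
Your proof is correct and follows essentially the same route as the paper: the paper's own argument takes $F(z)=1/P^n(z)$ (which is just $-n$ times your primitive, the factor being absorbed into the definition of $Re^{i\psi}$ via $\tfrac{-b}{2n\operatorname{Re}a}=\tfrac{1}{R}e^{i\psi}$) and arrives at the identical ODE $g'(r)=-\tfrac{R}{r^k}+\tfrac{R}{r^k}\gamma_1$ along the same ray $\varphi_0=\tfrac{2\pi-\psi}{k-1}$. The only divergence is the endgame: the paper integrates the bound $|g'|\leq\tfrac{3R}{2r^k}$ to get $1/P^n(re^{i\varphi_0})\lesssim r^{1-k}$, hence $P(re^{i\varphi_0})\gtrsim r^{(k-1)/n}$, contradicting the infinite-order vanishing of $P$, whereas you contradict the sign of $F$ (which is negative since $P>0$ off the origin, yet is forced to tend to $+\infty$) --- both conclusions are valid, and yours is marginally cleaner in that it only uses the strict positivity of $P$ from hypothesis (2)(i).
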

\begin{proof}
Suppose that there exist $a,b\in \mathbb C$ with $\text{Re} a\ne 0$ and $b\ne 0$ such that 
\begin{equation}\label{eq5}
\text{Re} [a P^{n+1}(z)+b z^k P'(z)]=\gamma(z) P^{n+1}(z),
\end{equation}
for some $k\in \mathbb N,\; k>1$ and for every $|z|<\epsilon_0$ with $\epsilon_0>0$ small enough. This equation is equivalent to
\begin{equation}\label{eq6}
1 +\text{Re}[ \frac{b}{\text{Re} a} z^k \dfrac{P'(z)}{P^{n+1}(z)}]=\gamma_1(z), \quad \forall \; 0<|z|<\epsilon_0,
\end{equation}
where $\gamma_1(z)=\gamma(z)/\text{Re} a$. Let $F(z)=\dfrac{1}{ P^{n}(z)}$ and write $ z=r e^{i\varphi}, \; \dfrac{-b}{2n \text{Re} a}=\dfrac{1}{R}e^{i\psi} $. By (\ref{eq6}), we get
$$\frac{\partial F}{\partial x}(z)\cos (k\varphi+\psi)+ \frac{\partial F}{\partial y} (z)\sin(k\varphi+\psi)=-\frac{R}{r^k}+ \frac{R}{r^k}\gamma_1(z). $$
If we set $\varphi_0=\dfrac{2\pi-\psi}{k-1}$, then 
$$\frac{\partial F}{\partial x}(r e^{i\varphi_0})\cos (\varphi_0)+ \frac{\partial F}{\partial y}(r e^{i\varphi_0}) \sin(\varphi_0)=-\frac{R}{r^k}+ \frac{R}{r^k}\gamma_1(re^{i\varphi_0}). $$
Let $g(r):= F(re^{i\varphi_0})$. Then we see that
 $$g'(r)=-\frac{R}{r^k}+ \frac{R}{r^k}\gamma_1(re^{i\varphi_0}).$$
Let $h(r):=g(r)+\dfrac{R}{1-k} \dfrac{1}{r^k-1}$. Then we may assume that there is $r_0$ small enough such that
$$|h'(r)|\leq \frac{3R}{2r^k},$$
for every $0<r\leq r_0$.
Thus, we have the following estimate
\begin{equation*}
\begin{split}
|g(r)|&\leq |g(r_0)|+\big | \int_{r_0}^r|g'(r)|  dr \big|\\
&\leq |g(r_0)|+\frac{3R}{2}\big | \int_{r_0}^r r^{-k}  dr \big|\\
&\leq |g(r_0)|-\frac{3R}{2(k-1)} r_0^{1-k}+\frac{3R}{2(k-1)} r^{1-k}.
\end{split}
\end{equation*}
Therefore, we obtain
\begin{equation*}
\begin{split}
\frac{1}{ P^{n}(re^{i\varphi_0})}&\lesssim \frac{1}{r^{1-k}},\\
 P(r e^{i\varphi_0})& \gtrsim r^{\frac{k-1}{n}}.
\end{split}
\end{equation*}
This means that $ P(re^{i\varphi_0})$ does not vanish to infinite order at $r=0$. It is a contradiction.
\end{proof}
\begin{lemma}\label{l3} There do not exist $a,b\in \mathbb C$ with $\text{Re} a\ne 0$ and $b\ne 0 $ such that
\begin{equation}\label{eq7}
\text{Re} [a P^{n+1}(z)+b z P'(z)]=\gamma(z) P^{n+1}(z),
\end{equation}
for some $n\geq 0$ and for every $|z|<\epsilon_0$ with $\epsilon_0>0$ small enough, where $\gamma(z)\to 0$ as $z\to 0$.
\end{lemma}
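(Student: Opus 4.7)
The plan is to adapt the strategies of Lemmas \ref{l1} and \ref{l2} to the case $k=1$, replacing the radial rays used there by integration along a suitably chosen logarithmic spiral whose tangent direction matches the one prescribed by the hypothesis. I split the analysis according to whether $n = 0$ or $n \geq 1$, setting $F := \ln P$ in the first case and $F := 1/P^n$ in the second. Performing the same manipulations as in Lemmas \ref{l1} and \ref{l2} respectively, the hypothesis reduces in both cases to the first-order PDE
\begin{equation*}
\cos(\varphi+\psi)\,\partial_x F + \sin(\varphi+\psi)\,\partial_y F = -\frac{R}{r} + \frac{R}{r}\gamma_1(z),
\end{equation*}
where $z = re^{i\varphi}$, $\gamma_1 \to 0$ at $0$, and the angle $\psi$ together with the positive constant $R$ are read off from $\frac{b}{2\,\text{Re}\,a}$ (if $n = 0$) or $\frac{-b}{2n\,\text{Re}\,a}$ (if $n \geq 1$).

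When $k>1$ the proofs of Lemmas \ref{l1} and \ref{l2} convert this PDE into a one-variable ODE by picking $\varphi_0 = \frac{2\pi-\psi}{k-1}$ so that the left-hand side becomes the radial derivative along $re^{i\varphi_0}$. For $k=1$ this is impossible. Instead, when $\cos\psi \neq 0$, I would integrate along the logarithmic spiral
\begin{equation*}
\gamma(r) := r\, e^{i(\tan\psi \cdot \ln r + \varphi_0)}, \qquad r \in (0,r_0],
\end{equation*}
which satisfies $\gamma'(r) = \frac{1}{\cos\psi}\, e^{i(\varphi(r)+\psi)}$ since $1+i\tan\psi = e^{i\psi}/\cos\psi$. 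Evaluating the PDE along this curve produces the ODE
\begin{equation*}
\frac{d}{dr} F(\gamma(r)) = \frac{1}{\cos\psi}\Bigl(-\frac{R}{r} + \frac{R}{r}\gamma_1(\gamma(r))\Bigr),
\end{equation*}
and integrating from a small $r$ up to $r_0$ while using $\gamma_1 \to 0$ to absorb the second term into an $o(\ln\tfrac{1}{r})$ error yields $F(\gamma(r)) = \frac{R}{\cos\psi}\ln\frac{r_0}{r} + o(\ln\tfrac{1}{r}) + O(1)$.

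The contradiction then comes from a case analysis on the sign of $\cos\psi$. If $\cos\psi > 0$ then $F(\gamma(r)) \to +\infty$ at logarithmic rate: for $n = 0$ this forces $P(\gamma(r)) \to +\infty$, impossible since $P(0)=0$ and $P$ is continuous, while for $n \geq 1$ it gives $P(\gamma(r)) \gtrsim (\ln\tfrac{1}{r})^{-1/n}$, which decays slower than every power of $r$ and so contradicts the infinite-order vanishing of $P$ at $0$. If $\cos\psi < 0$ then $F(\gamma(r)) \to -\infty$: for $n \geq 1$ this directly contradicts $F = 1/P^n > 0$, while for $n = 0$ one extracts a lower bound of the form $P(\gamma(r)) \geq c\, r^{R/|\cos\psi|+\epsilon}$, again contradicting infinite-order vanishing. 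The borderline case $\cos\psi = 0$ (i.e.\ $b$ purely imaginary) must be handled separately: the spiral degenerates and one integrates instead around a small circle $re^{i\varphi}$, $\varphi \in [0,2\pi]$, where the PDE reduces to $\frac{d}{d\varphi}F(re^{i\varphi}) = -R + R\gamma_1$; one loop gives $0 = -2\pi R + R\int_0^{2\pi}\gamma_1\,d\varphi$, which is absurd for $r$ small since $|\gamma_1|\to 0$.

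The main obstacle is identifying the correct substitute for the radial ray that worked when $k>1$; once one observes that the direction $e^{i(\varphi+\psi)}$ prescribed by the PDE is tangent precisely to logarithmic spirals of angular slope $\tan\psi$, the remaining estimates are a straightforward variation on those of Lemmas \ref{l1}--\ref{l2}, supplemented by the circle argument in the degenerate case.
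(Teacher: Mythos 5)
Your proof is correct and takes essentially the same route as the paper: the logarithmic spirals $re^{i(\tan\psi\,\ln r+\varphi_0)}$ (degenerating to circles when $\cos\psi=0$) are precisely the characteristic curves $z(t)=c\,e^{(\alpha+i\beta)t}$ of the first-order PDE that the paper derives from (\ref{eq7}), and your trichotomy on the sign of $\cos\psi$ is the paper's trichotomy on the sign of $\alpha=\text{Re}\,\frac{b}{2\text{Re}\,a}$ (resp.\ $\text{Re}\,\frac{-b}{2n\text{Re}\,a}$), with the same contradiction in each case. The only difference is presentational, a reparametrization of the same curves by $r$ instead of $t$.
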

\begin{proof} Suppose that there exist $a,b\in \mathbb C$ with $\text{Re} a\ne0$ and $b\ne 0$ such that (\ref{eq7}) holds. We first consider the case $ n=0$. Then the equation (\ref{eq7}) is equivalent to 
\begin{equation}\label{eq8}
\text{Re} [\frac{b}{\text{Re} a} z \dfrac{\partial}{\partial z}\ln P(z)]=-1+\gamma_1(z),
\end{equation}
where $\gamma_1(z):=\gamma(z)/\text{Re} a$. Let $u(z):= \ln P(z)$ and write $\dfrac{b}{2\text{Re} a}=\alpha+i\beta,\; z=x+iy$. Then, by (\ref{eq8}), we have the following first order partial differential equation
 \begin{equation}\label{eq9}
(\alpha x-\beta y)\frac{\partial}{\partial x} u(x,y)+(\beta x+\alpha y)\frac{\partial}{\partial y} u(x,y)=-1+\gamma_1(x,y).
\end{equation}
In order to solve this partial differential equation, we need to solve the following system of differential equation. 
\[
\begin{cases}
  x'(t)=\alpha x-\beta y\\
y'(t)=\beta x+\alpha y,\, t\in\mathbb R.
\end{cases}
\]
By a simple computation, we obtain 
\begin{equation}\label{eq12}
\begin{cases}
x(t)=c_1 e^{\alpha t}\cos (\beta t)+c_2 e^{\alpha t}\sin (\beta t) \\
y(t)=-c_2 e^{\alpha t}\cos (\beta t)+c_1 e^{\alpha t}\sin (\beta t , \; t\in \mathbb R,
\end{cases}
\end{equation}
where $c_1,c_2$ are two constant real numbers.
Let $g(t):=u(x(t),y(t))$. Then $g'(t)=-1+\gamma_1(x(t),y(t))$. Thus, $g(t)=-t+ \int\limits_{t_0}^{t}\gamma_1(x(s),y(s)) ds+ t_0+g(t_0)$. From (\ref{eq12}), we get 
\begin{equation}\label{eq11}
 x^2+y^2=(c^2_1+c^2_2) e^{2\alpha t},\; t\in \mathbb R.
\end{equation}
Consider three following cases 

\noindent
{\bf Case 1.} $\alpha=0$. In this case, take $c_1=r>0,\; c_2=0$, where $r$ small enough. Then, on each small circle $\{x(t)=r\cos(t),\; y(t)= r\sin(t),\; t\in [0,2\pi]\}$, $g(t)= -t+ \int\limits_{0}^{t}\gamma_1(x(s),y(s)) ds+u(r,0)$. Taking $r$ small enough, we may assume that $|\gamma_1(x(s),y(s))|\leq1/2$ for all $s\in [0,2\pi]$. It is easy to see that $|g(2\pi)-g(0)|\geq \pi$. This is absurd since $g(2\pi)=g(0)=u(r,0)$. 

\noindent
{\bf Case 2.} $\alpha>0$. By (\ref{eq11}), $(x(t),y(t))\to 0$ as $t\to -\infty$. Then, $u(x(t),y(t))\to +\infty$ as $t\to-\infty$. This is a contradiction. 

\noindent
{\bf Case 3.} $\alpha<0$. By (\ref{eq11}), we have $(x(t),y(t))\to 0$ as $t\to +\infty$ and $t= \dfrac{1}{2\alpha} \ln \dfrac{x^2+y^2}{c_1^2+c_2^2}$. Taking $t_0>0$ big enough, we may assume that $|\gamma_1(x(s),y(s))|\leq1$ for all $s\geq t_0$. Then for all $t\geq t_0$, we have
\begin{equation*}
\begin{split}
g(t) &\geq -(t-t_0)-|\int\limits_{t_0}^{t}\gamma_1(x(s),y(s)) ds|-|g(t_0)| \\
      &\geq -(t-t_0)-|\int\limits_{t_0}^{t}|\gamma_1(x(s),y(s))| ds|-|g(t_0)|\\
      &\geq -(t-t_0)- |t-t_0|-|g(t_0)| \\
      &\geq -2(t-t_0)- |g(t_0)|.
\end{split}
\end{equation*}
Hence, for all $t\geq t_0$, we obtain
\begin{equation*}
\begin{split}
P(z(t))&\gtrsim  e^{-2t}\\
           &\gtrsim |z(t)|^{-1/{\alpha}},
\end{split}
\end{equation*}
where $z(t):=x(t)+iy(t)$. It is impossible since $P$ vanishes to infinite order at $0$.

We now consider the case $n>0$. Then the equation (\ref{eq7}) is equivalent to 
\begin{equation}\label{eq13}
\text{Re} [\frac{b}{-n \text{Re} a} z \dfrac{\partial}{\partial z} \dfrac{1}{P^n(z)}]=-1+\gamma_1(z),
\end{equation}
where $\gamma_1(z):=\gamma(z)/ \text{Re} a$. Let $u(z):= \dfrac{1}{P^n(z)}$ and write $\dfrac{b}{-2n\text{Re} a}=\alpha+i\beta,\; z=x+iy$. Then, by (\ref{eq13}), we have the following first order partial differential equation
 \begin{equation}\label{eq14}
(\alpha x-\beta y)\frac{\partial}{\partial x} u(x,y)+(\beta x+\alpha y)\frac{\partial}{\partial y} u(x,y)=-1+\gamma_1(x,y).
\end{equation}
In order to solve this partial differential equation, we need to solve the following system of differential equation. 
\[
\begin{cases}
  x'(t)=\alpha x-\beta y\\
y'(t)=\beta x+\alpha y, \, t\in\mathbb R.
\end{cases}
\]
By a simple computation, we obtain 
\begin{equation}\label{eq15}
\begin{cases}
x(t)=c_1 e^{\alpha t}\cos (\beta t)+c_2 e^{\alpha t}\sin (\beta t) \\
y(t)=-c_2 e^{\alpha t}\cos (\beta t)+c_1 e^{\alpha t}\sin (\beta t) , \; t\in \mathbb R,
\end{cases}
\end{equation}
where $c_1,c_2$ are two constant real numbers. Let $g(t):=u(x(t),y(t))$. Then $g'(t)=-1+\gamma_1(x(t),y(t))$. Thus, $g(t)=-t+ \int\limits_{t_0}^{t}\gamma_1(x(s),y(s)) ds+ t_0+g(t_0)$. From (\ref{eq15}), we get 
\begin{equation}\label{eq16}
 x^2+y^2=(c^2_1+c^2_2) e^{2\alpha t},\; t\in \mathbb R.
\end{equation}
 Consider three following cases 

\noindent
{\bf Case 1.} $\alpha=0$. In this case, take $c_1=r>0,\; c_2=0$, where $r$ small enough. Then, on each small circle $\{ x(t)=r\cos(t),\; y(t)= r\sin(t),\; t\in [0,2\pi]\}$, $g(t)= -t+ \int\limits_{0}^{t}\gamma_1(x(s),y(s)) ds+u(r,0)$. Taking $r$ small enough, we may assume that $|\gamma_1(x(s),y(s))|\leq1/2$ for all $s\in [0,2\pi]$. It is easy to see that $|g(2\pi)-g(0)|\geq \pi$. This is not possible since $g(2\pi)=g(0)=u(r,0)$. 

\noindent
{\bf Case 2.} $\alpha<0$. By (\ref{eq16}), $(x(t),y(t))\to 0$ as $t\to +\infty$. Then, $u(x(t),y(t))\to -\infty$ as $t\to-\infty$. It is a contradiction.
 
\noindent
{\bf Case 3.} $\alpha>0$. By (\ref{eq16}), we have $(x(t),y(t))\to 0$ as $t\to -\infty$ and $t= \dfrac{1}{2\alpha} \ln \dfrac{x^2+y^2}{c_1^2+c_2^2}$. Taking $t_0<0$ such that $|t_0|$ big enough, we may assume that $|\gamma_1(x(s),y(s))|\leq1$ for all $s\leq t_0$. Then for all $t\leq t_0$, we have the following estimate
\begin{equation*}
\begin{split}
g(t)&\leq -(t-t_0)+|\int\limits_{t_0}^{t}\gamma_1(x(s),y(s)) ds|+|g(t_0)| \\
       &\leq -(t-t_0)+|\int\limits_{t_0}^{t}|\gamma_1(x(s),y(s))| ds|+|g(t_0)|\\
        &\leq -(t-t_0)+ |t-t_0|+|g(t_0)| \\
          &\leq -2(t-t_0)+ |g(t_0)| .
\end{split}
\end{equation*}
Hence, for all $t\leq t_0$, we obtain
\begin{equation*}
\begin{split}
P^n(z(t))&\gtrsim  \frac{1}{-2t}\\
            &\gtrsim  \frac{-1}{\ln |z(t)|},
\end{split}
\end{equation*}
where $z(t):=x(t)+iy(t)$. This implies that 
$$\lim_{t\to-\infty}\frac{P(z(t))}{|z(t)|} =+\infty. $$
This is impossible since $P$ vanishes to infinite order at $0$.
\end{proof}
Let $F=(f,g)\in Aut(\Omega)$ be such that $F(0,0)=(0,0)$. Because of Bell's condition R of $\partial \Omega$, F extends smoothly to the boumdary of $\Omega$. Let $U$ be a a neighborhood of $(0,0)$. Then, there exists a neighborhood $V$ of $(0,0)$ such that 
\begin{equation}\label{eq19}
F(\overline{\Omega}\cap V)\subset \overline{\Omega}\cap U.
\end{equation}
The following lemma is similar to Lemma 2.5 of \cite{La}.
\begin{lemma}\label{l4} Let $F=(f,g)\in Aut(\Omega)$. Let $U, V$ be two neighborhoods of $(0,0)$ such that (\ref{eq19}) holds. Then, for any $(z_1,z_2)\in V$, 
\begin{enumerate}
\item[(i)] $g(z_1,0)=0$. 
\item[(ii)] $ f(z_1,z_2)=f(z_2)$
\end{enumerate} 
\end{lemma}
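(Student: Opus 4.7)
\emph{Part (i)} uses the biholomorphic invariance of the set of weakly pseudoconvex boundary points, together with the Remark. By Bell's condition R, $F$ extends $\mathcal C^\infty$-smoothly to $\overline{\Omega}$ and $F|_{\partial\Omega}$ is a smooth diffeomorphism preserving the Levi rank at every boundary point. The Remark identifies the Levi-degenerate points of $\partial\Omega\cap V$ as precisely the arc $\Gamma=\{(it,0):|t|<\epsilon\}$, all other nearby boundary points being strictly pseudoconvex; hence $F(\Gamma)$ is contained in the corresponding arc of $\partial\Omega\cap U$, and $F(it,0)=(i\phi(t),0)$ for some smooth real $\phi$. In particular $g(it,0)=0$ for every small real $t$, and because $z_1\mapsto g(z_1,0)$ is holomorphic and vanishes on the imaginary axis, $g(z_1,0)\equiv 0$ near $0$.

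\emph{Part (ii)} is to be read as $f(z_1,z_2)=f(z_1)$, i.e.\ $f$ is independent of $z_2$ (the printed formula $f(z_1,z_2)=f(z_2)$ is a typo, since a non-degenerate Jacobian at the origin precludes $f$ from depending only on $z_2$). Using (i), write $g(z_1,z_2)=z_2\,G(z_1,z_2)$ and expand
\[
f(z_1,z_2)=\sum_{k\geq 0}a_k(z_1)\,z_2^k,\qquad a_k(z_1)=\tfrac{1}{k!}\partial_{z_2}^k f(z_1,0),
\]
with each $a_k$ holomorphic; the goal is to show $a_k\equiv 0$ for every $k\geq 1$. Since $\rho$ is a regular defining function at $(0,0)$, Malgrange division furnishes a smooth positive $\lambda$ with $\rho\circ F=\lambda\,\rho$ on $\overline{\Omega}\cap V$. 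My plan is to match the pure $z_2^k$ coefficients of the two Taylor jets at each boundary point $(iy,0)\in\Gamma$, jets which are intrinsic since both sides are $\mathcal C^\infty$ on $\overline{\Omega}$.

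On the left, $\rho\circ F=\text{Re}\,f+P(g)+Q(g,\text{Im}\,f)$. By (i), $a_0(iy)=f(iy,0)=i\phi(y)$ is purely imaginary, so $\text{Re}\,f$ contributes exactly $\tfrac{1}{2}a_k(iy)$ to the coefficient of $z_2^k$ for $k\geq 1$; the flatness of $P$ combined with $|g|\sim|z_2|$ makes $P(g)$ flat in $(z_2,\bar z_2)$; and $Q(g,\text{Im}\,f)=|g|^4(\text{Im}\,f)^2R$ carries the factor $z_2^2\bar z_2^2$, so every Taylor monomial of this term contains $\bar z_2$. Hence the pure $z_2^k$ coefficient of $\rho\circ F$ at $(iy,0)$ equals $\tfrac{1}{2}a_k(iy)$. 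On the right, the same three structural reasons force the pure $z_2^k$ coefficient of $\rho$ itself to vanish at $(iy,0)$ for every $k\geq 0$; since the pure $z_2^k$ coefficient of a smooth product is the convolution of the pure $z_2^j$ coefficients of the factors, the pure $z_2^k$ coefficient of $\lambda\rho$ also vanishes. Matching gives $\tfrac{1}{2}a_k(iy)=0$ for every small real $y$, and holomorphy of $a_k$ then yields $a_k\equiv 0$ near $0$, so $f(z_1,z_2)=a_0(z_1)$.

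The main obstacle is the legitimacy of the Taylor-coefficient matching in the merely $\mathcal C^\infty$ (non-real-analytic) setting. The crucial input is the coincidence of flatness of $P$, the $|z_2|^4$-prefactor of $Q$, and the holomorphy of $f,g$: together these annihilate the pure $z_2^k$-column on the right for every $k$, isolating $\tfrac{1}{2}a_k(iy)$ on the left. The intrinsic meaning of jets of smooth functions on $\overline{\Omega}$ at boundary points is handled, as usual, by Whitney/Seeley extension and uniqueness of Taylor series.
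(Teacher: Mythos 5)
Your proof is correct and follows essentially the same route as the paper: part (i) via a CR/biholomorphic invariant that forces $F(\gamma\cap V)\subset\gamma$ (you use Levi rank where the paper uses D'Angelo type) followed by boundary uniqueness for $g(\cdot,0)$, and part (ii) via the defining-function identity $\rho\circ F=\lambda\rho$ and the vanishing of all pure $\partial_{z_2}^k$-derivatives along $\gamma$, using the flatness of $P$, the $|z_2|^4$ factor in $Q$, and part (i). Your Taylor-coefficient bookkeeping is just a repackaging of the paper's induction on $N$ (and your reading of (ii) as $f=f(z_1)$ correctly identifies the typo), so no substantive difference.
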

\begin{proof}

\noindent
a) Let $U, V$ be two neighborhoods of $(0,0)$ such that (\ref{eq19}) holds. Let $\gamma$ be the set of all points $(it,0)\in \partial \Omega\cap U$. By Bell's condition R, the restriction to $\partial \Omega$ of the extension of $F$ to $\overline{\Omega}$ defines a C-R automorphism of $\partial \Omega$. Since the D'Angelo type is a C-R invariant, we have $F(\gamma\cap V)\subset \gamma $. Hence, $g(it,0)=0$ and $\text{Re} f(it,0)=0$. Since $h(z_1):=g(z_1,0)\in Hol(\mathrm H)\cap \mathcal C^\infty(\overline{\mathrm H})$, $g(z_1,0)\equiv 0 $. Here, we denote $\mathrm H$ by $\mathrm H=\{z_1\in \mathbb C: \text{Re} z_1<0\}$.    

\noindent
b) A classical argument based on the Hopf's lemma shows that $(\rho\circ F)(z_1,z_2)$ is also a defining function on $V$. In particular, there exists a smooth function $k(z_1,z_2)$ which is strictly positive and such that, for any $(z_1,z_2)\in V$,
\begin{equation}\label{eq20}
\begin{split}
\text{Re} z_1&+ P(z_2)+Q(z_2, \text{Im} z_1)\\
            &             =k(z_1,z_2)\big[  \text{Re} f( z_1,z_2)+ P(g(z_1,z_2))+Q(g(z_1,z_2), \text{Im}  f( z_1,z_2)\big]. 
\end{split}
\end{equation}
We claim that for any $ N\geq 1 $ and any $ (it,0)\in \gamma\cap V$  
\begin{equation}\label{eq21}
\dfrac{\partial^N}{\partial z_2^N}\Big( \text{Re} f(z_1,z_2)+ P(g(z_1,z_2))+Q(g(z_1,z_2), \text{Im} f(z_1,z_2))\Big)\Big |_{(it,0)}=0. 
\end{equation}
In fact, for any $(it,0)\in \gamma\cap V$ we have that 
$$\text{Re} f(it,0)+ P(g(it,0))+Q(g(it,0), \text{Im} f(it,0))=0.$$

From (\ref{eq20}), it follows that
$$\dfrac{\partial}{\partial z_2}\Big( \text{Re} f(z_1,z_2)+ P(g(z_1,z_2))+Q(g(z_1,z_2), \text{Im} f(z_1,z_2))\Big)\Big |_{(it,0)}=0,$$
 which implies (\ref{eq21}) for $N=1$. Taking the $N$-th derivative with respect to $z_2$ of (\ref{eq20}) and using an inductive argument, it follows that (\ref{eq21}) holds also for any $N>1$. From a), (\ref{eq21}) and the property $(2.i)$ we get that for any $N\geq 1$ and any $(it,0)\in \partial \Omega\cap V $
\begin{equation}\label{eq22}
\dfrac{\partial^N}{\partial z_2^N} f(it,0)=0. 
\end{equation}
Using the same arguments as for (a), we see that (\ref{eq22}) implies (b).
\end{proof}
\begin{proof}[Proof of theorem \ref{T2}]
Suppose that $(0,0)\in \partial\Omega$ be a parabolic boundary point associated with a one-parameter group $\{F_\theta \}_{\theta\in \mathbb R}\subset Aut(\Omega)$. Let $H$ be the vector field generating the group $\{F_\theta \}_{\theta\in \mathbb R}$, i.e., 
$$H(z)=\dfrac{d}{d\theta} F_\theta(z) \Big |_{\theta=0}.$$
Since $\Omega$ satisfies Bell's condition R, each automorphism of $\Omega$ extends to be of class $\mathcal C^\infty$ on $\overline \Omega$. Therefore, $H\in Hol(\Omega)\cap  \mathcal  C^\infty(\overline \Omega)$. Furthermore, since $F_\theta(\partial \Omega)\subset\partial \Omega$, it follows that $H(z)\in T_z(\partial \Omega)$ for all $z\in \partial\Omega$, i.e.,
\begin{equation}\label{eq221}
(\text{Re} H) \rho(\zeta)=0,\; \forall \; \zeta \in \partial \Omega.
\end{equation}
A vector field $H\in Hol(\Omega)\cap \mathcal  C^\infty(\overline \Omega)$ satisfying (\ref{eq221}) is called to be a holomorphic tangent vector field for domain $\Omega$. Since $F_\theta(0,0)=(0,0)$, it follows from Lemma \ref{l4} that $F_\theta(z_1,z_2)=(f_\theta(z_1), z_2 g_\theta(z_1,z_2))$, where $f_\theta$ and $g_\theta$ are holomorphic on $U\cap \Omega$, where $U$ is a neighborhood of $(0,0)$. Hence, the vector field $H$ has the form
$$ H(z_1,z_2)=h_1(z_1)\dfrac{\partial}{\partial z_1}+ z_2 h_2(z_1,z_2) \dfrac{\partial}{\partial z_2},$$
where $h_1$ and $h_2$ is holomophic on $\Omega$ and, is of class $\mathcal C^\infty$ up to the boundary $\partial \Omega$. Moreover, $h_1$ vanishes at the origin. By a simple computation, we get
\begin{equation*}
\begin{split}
\frac{\partial}{\partial z_1} \rho(z_1,z_2)&= \frac{1}{2} +\frac{\partial}{\partial z_1}Q(z_2, \text{Im} z_1),\\
\frac{\partial}{\partial z_2} \rho(z_1,z_2)&= P'(z_2)+\frac{\partial}{\partial z_2}Q(z_2, \text{Im} z_1).
\end{split}
\end{equation*}
Since $H(z)$ is a tangent vector field to $\partial \Omega$, we have
\begin{equation}\label{eq23}
\begin{split}
\text{Re} \Big [&( \frac{1}{2} +\frac{\partial}{\partial z_1}Q(z_2, \text{Im} z_1))h_1(z_1)+\\
&+(P'(z_2)+\frac{\partial}{\partial z_2}Q(z_2, \text{Im} z_1)) z_2h_2(z_1,z_2) \Big ]=0,
\end{split}
\end{equation}
for all $(z_1,z_2)\in\partial \Omega$. For any $(it,0)\in\partial \Omega\cap U$, we have
\begin{equation}\label{eq231}
\text{Re} h_1(it)=0.
\end{equation}
Since $h_1\in Hol(\mathrm H)\cap \mathcal C^\infty(\overline{\mathrm H})$, where $\mathrm H$ is the left half-plane, by the Schwarz reflection principle, $h_1$ can be extended to be a holomorphic on a neighborhood of $z_1=0$. From (\ref{eq23}), it follows that, for any $(-P(z_2), z_2)\in \partial \Omega\cap U$, 
\begin{equation}\label{eq24}
\begin{split}
\text{Re} \Big [\frac{1}{2} h_1(-P(z_2))+z_2P'(z_2)h_2(-P(z_2),z_2) \Big ]=0.
\end{split}
\end{equation}
 Expanding $h_1$ and $h_2$ into Taylor series about the origin, we get $h_1(z_1)=\sum\limits_{n=0}^\infty a_n z_1^n $ and $h_2(z_1,z_2)=\sum\limits_{k=0}^\infty b_k (z_1)z_2^k $, where $a_n\in\mathbb C$, $b_k\in Hol(\mathrm H)\cap \mathcal C^\infty(\overline{\mathrm H})$, for any $n, k\in\mathbb N$. Note that $a_0=0$ since $h_1(0)=0$. If there exists an integer number $n \geq1 $ such that $\text{Re} a_n\ne 0 $, then the biggest term in $\text{Re} [\dfrac{1}{2}h_1(-P(z_2))]$ has the form $\text{Re} a_n P^{n}(z_2)$. Therefore, there exists at least $k\in \mathbb N$ such that either $b_k(0)\ne 0$ or $b_k(z_1)$ vanishes to finite order at $z_1=0$. Then the biggest term in $\text{Re} \big [ z_2P'(z_2)h_2(-P(z_2),z_2)\big] $ has the form $\text{Re} \big [b z^k_2P'(z_2) P^l(z_2)\big] $, where $b\in \mathbb C^*$, $l\in\mathbb N$. By (\ref{eq24}), there exists $\epsilon_0>0$ such that 
\begin{equation}\label{eq25}
\begin{split}
\text{Re} \big [ a_n P^{n-l}(z_2)  + b z^k_2P'(z_2) \big ]=o(P^{n-l}(z_2)),
\end{split}
\end{equation}
for all $|z_2|<\epsilon_0$. It is easy to see that $n>l$. Thus, by Lemma \ref{l1}, Lemma \ref{l2}, and Lemma \ref{l3}, we get $\text{Re} a_n=b=0$. This is a contradiction. Therefore, $\text{Re} a_n=0$ for every $n\geq1$ and thus, we can write
$h_1(z_1)=i.\sum\limits_{n=1}^\infty \alpha_n z_1^n $, where $\alpha_n\in \mathbb R,\; n=1,2,\cdots$. Let $u(z_1):= \text{Re} h_1(z_1)$. Then the function $u$ is harmonic on the left haft-plane $ \mathrm H$ and is smooth up to the boundary $ \partial \mathrm H$. By (\ref{eq231}), we have, for any real number $t$ small enough, $u(it)=0$. Moreover, $u(-t)=0$ for any $t$ small enough since $h_1(z_1)=i\sum\limits_{n=1}^\infty \alpha_n z_1^n $. Hence, by the maximum principle, we conclude that $u(z_1)\equiv 0$. Consequensely, $h_1(z_1)\equiv 0$ and hence, $H$ becomes a planar vector field. This is impossible since $\partial \Omega$ is not flat near the origin. So the proof is completed. 
\end{proof}

\end{document}